\theoremstyle{plain}
\newtheorem{theorem}{Theorem}[section]
\newtheorem{lemma}[theorem]{Lemma}
\newtheorem{corollary}[theorem]{Corollary}
\newtheorem{conjecture}{Conjecture}
\theoremstyle{definition}
\newtheorem{definition}{Definition}[section]
\newtheorem{question}{Question}
\theoremstyle{remark}
\begin{document}
\title{Fully reducible simple Venn diagrams}
\author{M. Farrokhi D. G.}
\date{}
\keywords{Venn diagram, reducibility}
\subjclass[2000]{Primary 52B11, 52B15; Secondary 52B05, 05C99}
\address{Faculty of Mathematical Sciences, Ferdowsi University of Mashhad, Iran}
\email{m.farrokhi.d.g@gmail.com}
\begin{abstract}
We generalize Venn diagrams in spaces of arbitrary dimension $\geq 2$ and study simple Venn diagrams with the reducing property. Three equivalent conditions for a simple Venn diagram to reduce it completely and a classification of those diagrams is discussed. For example, we show that a simple $m$-dimensional $n$-Venn diagram is fully reducible if $n\leq m+1$ and conjecture that the converse is also true. An application of the generalized Venn diagrams and some more open problems are given.
\end{abstract}
\maketitle
\section{Introduction}
The notion of Venn diagrams first considered by John Venn in 1880 and a formal definition of a Venn diagram first introduced by Branko Gr\"{u}nbaum \cite{bg}, as a set of $n$ closed simple curves in the plane which make the plane into $2^n$ non-empty connected regions that are related to the intersection of interior and exterior of the curves. In particular, there are only finitely many points of intersections between curves. Such a set of curves is called an $n$-Venn diagram or simply a Venn diagram. Also a simple Venn diagram is one in which there are no points in common with three curves. A similar definition can be recognized for a Venn diagram in spaces of higher dimensions as is introduced in section 2. In this paper, we are interested in the structure of simple Venn diagrams, which can be reduced completely, i.e., every subset is again a Venn diagram. For a survey of Venn diagrams and related topics see \cite{fr-mw}.
\section{Venn diagrams in Euclidian spaces}
As the aim of this note, we consider the notion of Venn diagrams in spaces of dimension $\geq 2$. For convenience, we set some conventions throughout this paper:

(1) An \textit{$m$-space} is a bounded or unbounded subset of a Euclidian space (the underlying space), which can be transformed continuously to an $m$-dimensional rectangle or a subspace, respectively.

(2) An \textit{$m$-surface} is a subset of an $m$-space as underlying space, which can be transformed to an $(m-1)$-dimensional sphere. An $m$-surface is assumed to be empty set when $m\leq0$.

(3) For an $m$-surface $S$, by $S^0$ and $S^1$ we mean the interior and exterior of $S$, respectively.
\begin{definition}\label{d-1}
Let $S_1,\ldots,S_n$ be $m$-surfaces and $S$ be an $m$-space. Suppose that $S_i$'s make $S$ into $2^n$ non-empty connected subsets $S_1^{\varepsilon_1}\cap\cdots\cap S_n^{\varepsilon_n}$, which are called \textit{regions} and can be transformed continuously to the interior or exterior of an $m$-surface, where $\varepsilon_i=0,1$. In particular, if the intersection of any two surfaces is a union of finitely many $(m-1)$-surfaces, then the set $\{S_1,\ldots,S_n\}$ is called an \textit{$m$-dimensional Venn diagram} or simply a \textit{Venn diagram}.
\end{definition}
\begin{definition}\label{d-2}
If $V=\{S_1,\ldots,S_n\}$ is an $m$-dimensional Venn diagram such that the intersection of any $k$ surfaces is a union of finitely many $(m-k+1)$-surfaces, then $V$ is called a \textit{simple Venn diagram}.
\end{definition}

At the first step, we need to sure about the existence of generalized Venn diagrams for any number of surfaces and finite dimensions greater than $1$. 
\begin{theorem}\label{t-1}
There exists a simple $m$-dimensional $n$-Venn diagram for each $m\geq 2$ and $n\geq 1$.
\end{theorem}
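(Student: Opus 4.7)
I would fix $n\ge 1$ and proceed by induction on the ambient dimension $m\ge 2$. The base case $m=2$ is the classical existence of a simple $n$-Venn diagram in the plane for every $n\ge 1$ (see \cite{fr-mw}); the usual proof is itself an induction on $n$, attaching at each stage a new simple closed Jordan curve that meets the previous diagram in general position and cuts every existing region into two non-empty pieces.

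For the inductive step $m\to m+1$, I would lift a given simple $m$-dimensional $n$-Venn diagram $V=\{S_1,\ldots,S_n\}$ into $\mathbb{R}^{m+1}$ by a \emph{suspension} construction. Choose smooth defining functions $f_i\colon\mathbb{R}^m\to\mathbb{R}$ with $\{f_i>0\}=S_i^0$, $\{f_i=0\}=S_i$, $\{f_i<0\}=S_i^1$, and $\nabla f_i\ne 0$ along $S_i$, and set
\[
\tilde S_i \;=\; \bigl\{(x,t)\in\mathbb{R}^m\times\mathbb{R}\;:\; t^2=f_i(x)\bigr\}.
\]
Each $\tilde S_i$ is the topological boundary of the $(m+1)$-ball $\{t^2\le f_i(x)\}$, hence an $(m+1)$-surface in the sense of the paper; the slice $t=0$ reproduces $V$, so every one of the $2^n$ regions in the lifted configuration inherits a non-empty slice and is itself non-empty. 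Thus $\{\tilde S_1,\ldots,\tilde S_n\}$ is a Venn diagram in $\mathbb{R}^{m+1}$. For simplicity, the $k$-fold intersection
\[
\tilde S_{i_1}\cap\cdots\cap\tilde S_{i_k} \;=\; \bigl\{(x,t)\;:\; t^2=f_{i_1}(x)=\cdots=f_{i_k}(x)\ge 0\bigr\}
\]
is the \emph{double} of the set $A=\{x:f_{i_1}(x)=\cdots=f_{i_k}(x)\ge 0\}$ glued along its boundary $S_{i_1}\cap\cdots\cap S_{i_k}$. Provided each such $A$ is a topological $(m-k+1)$-ball, its double is a topological $(m-k+1)$-sphere, i.e.\ an $(m+2-k)$-surface, which is exactly what the definition of simplicity in dimension $m+1$ requires.

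\textbf{Main obstacle.} The delicate point is arranging the defining functions so that every intersection set $A$ is simultaneously a topological ball of the correct dimension. I would handle this either by an explicit choice (for example quadratic $f_i$ modelled on $1-|x-c_i|^2$ on a large ambient ball, for which every $A$ is an affine slice of a ball and hence itself a ball) or by a generic perturbation argument: the transversality supplied by simplicity of $V$ carries over to the defining functions, and degenerate topology of some $A$ is a nongeneric, codimension $\ge 1$ phenomenon in the space of admissible $f_i$.
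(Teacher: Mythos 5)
Your overall strategy---induction on the dimension $m$ with the planar case of Venn/Edwards as base, then lifting a given simple $m$-dimensional diagram to dimension $m+1$---is exactly the paper's. The paper's lift, however, is purely combinatorial: it takes $S_i'$ to be the boundary of the prism $\bar{S}_i\times[1-\frac{1}{i},2-\frac{1}{i}]$, so that every multiple intersection is an explicit union of products (pieces of $S_{i_1}\cap\cdots\cap S_{i_k}$ crossed with intervals, and pieces of the original surfaces sitting at single heights), and the staggered interval endpoints keep the flat caps of distinct prisms at distinct heights and hence disjoint. Simplicity of the lifted diagram is then read off directly from simplicity of $V$, with no auxiliary choices to be made.

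The gap in your version is precisely the one you flag yourself, and neither proposed repair closes it. The sets $A=\{x: f_{i_1}(x)=\cdots=f_{i_k}(x)\ge 0\}$ are not determined by the diagram $V$: away from the zero sets, the loci $\{f_i=f_j\}$ depend entirely on the arbitrary extensions $f_i$, and you need every component of every such $A$ to be a topological ball of the right dimension. The explicit quadratic choice $f_i=1-|x-c_i|^2$ cannot carry the induction: it replaces the given surfaces by round spheres, and an arrangement of $n$ spheres in $\mathbb{R}^m$ has only polynomially many (in $n$) cells, so it cannot produce the $2^n$ regions of a Venn diagram once $n$ is large; in any case the inductive hypothesis hands you an arbitrary simple diagram, not one of this special form. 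The genericity argument yields only that $A$ is generically a manifold with boundary $S_{i_1}\cap\cdots\cap S_{i_k}$; being a ball (rather than disconnected, or a manifold with handles) is not a transversality condition and is not generic. There is also a smaller unaddressed point: the lifted regions must be shown connected, not merely non-empty, since over a point $x$ with $f_j(x)>0$ the exterior fibre $\{t: t^2>f_j(x)\}$ has two components---though the paper's own proof is equally silent on connectivity, so this is not a defect of your route alone. To make your construction work you would essentially have to prescribe the equalizer sets by hand, at which point the paper's prism construction is the cleaner way of doing exactly that.
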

\begin{proof}
The case $m=2$ is proved by John Venn \cite{jv} and Anthony Edwards \cite{awfe1,awfe2} using an inductively construction. Now let $V=\{S_1,\ldots,S_n\}$ be a simple $m$-dimensional Venn diagram. Then $V'=\{S_1',\ldots,S_n'\}$ is a simple $(m+1)$-dimensional Venn diagram, where $S_i'$ is the boundary of $\bar{S}_i\times [1-\frac{1}{i},2-\frac{1}{i}]$ with $\bar{S}_i=S_i\cup S_i^0$ for each $1\leq i\leq n$.
\end{proof}
\begin{definition}\label{d-3}
Let $V=\{S_1,\ldots,S_n\}$ be an $m$-dimensional Venn diagram. Then $V$ is called \textit{reducible} if $V$ has an $(n-1)$-subset that is a Venn diagram. Also, $V$ is called \textit{fully reducible} if $W$ is a Venn diagram for every subset $W$ of $V$.
\end{definition}

As an example of a simple Venn diagram that is not reducible see \cite{fr-mw}.

Our ultimate goal is to give some equivalent conditions for a Venn diagram to be fully reducible, see Theorems \ref{t-2}, \ref{t-3} and \ref{t-4}. Theorem \ref{t-2} distinguishes the extra conditions in definition of fully reducible simple Venn diagrams. Theorem \ref{t-3} states that a simple Venn diagram is fully reducible if it possesses the maximum possible number of edges and Theorem \ref{t-4}, under the assumption that its converse is true, gives the complete classification of fully reducible simple Venn diagrams, which depends only on the dimension of the underlying space. Note that when surfaces intersect in a Venn diagram, they broke into some connected parts, which we called them \textit{edges} of the Venn diagram. In the following, by a (simple) Venn diagram we mean a Venn diagram of arbitrary dimension $\geq 2$.

To prove the main theorems, we need some lemmas, which we consider in the following.
\begin{lemma}\label{l-1}
Let $V=\{S_1,\ldots,S_n\}$ be a simple Venn diagram and $W$ be a subset of $V$ of size $k$. Then $W$ contains all $2^k$ possible regions.
\end{lemma}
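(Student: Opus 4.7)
The plan is to argue by containment: every candidate region of $W$ is a union of regions of $V$, and since all regions of $V$ are non-empty, every candidate region of $W$ is non-empty as well.

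First, I would relabel so that $W=\{S_{i_1},\ldots,S_{i_k}\}\subseteq V$ and fix an arbitrary sign vector $(\varepsilon_{i_1},\ldots,\varepsilon_{i_k})\in\{0,1\}^k$, aiming to show that the candidate region
\[
R_W := S_{i_1}^{\varepsilon_{i_1}}\cap\cdots\cap S_{i_k}^{\varepsilon_{i_k}}
\]
is non-empty. The key set-theoretic identity is that $R_W$ decomposes as the union, over all extensions $(\varepsilon_1,\ldots,\varepsilon_n)$ of the fixed $k$ coordinates, of the sets $S_1^{\varepsilon_1}\cap\cdots\cap S_n^{\varepsilon_n}$. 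Since each extension yields a region of $V$, which is non-empty by Definition \ref{d-1}, any one such extension already witnesses $R_W\neq\varnothing$.

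Next, I would note that as $(\varepsilon_{i_1},\ldots,\varepsilon_{i_k})$ ranges over $\{0,1\}^k$, we obtain $2^k$ distinct candidate regions (distinct because the $S_{i_j}^0$ and $S_{i_j}^1$ are disjoint interior/exterior pieces), and the argument above shows each of them is non-empty. Hence $W$ realizes all $2^k$ sign patterns, which is the conclusion.

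I do not expect any real obstacle: the lemma is essentially a tautology from the fact that a Venn diagram on $n$ surfaces produces all $2^n$ sign patterns, combined with the observation that forgetting coordinates corresponds to taking unions. The only thing to be careful about is not to over-interpret the word \emph{region} as forcing connectedness of the pieces of $W$ viewed as a diagram on its own; the lemma only asserts that each of the $2^k$ set-theoretic intersections is non-empty, and does not claim that $W$ is itself a Venn diagram. This weaker reading is exactly what the inclusion argument gives, and it is what is needed for the later reducibility theorems.
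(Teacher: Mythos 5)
Your argument is correct and is essentially the paper's own proof, just phrased directly (one non-empty region of $V$ extending the sign pattern lies inside the candidate region of $W$) rather than by contradiction (an empty candidate region of $W$ would force an empty region of $V$). Your reading of ``contains all $2^k$ possible regions'' as non-emptiness of the sign patterns, without any connectedness claim, also matches how the lemma is used later.
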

\begin{proof}
Clearly, if $W=\{S_{i_1},\ldots,S_{i_k}\}$ and $S_{i_1}^{\varepsilon_1}\cap\cdots\cap S_{i_k}^{\varepsilon_k}=\varnothing$ then
\[S_1^{\varepsilon_1}\cap\cdots\cap S_n^{\varepsilon_n}=S_{i_1}^{\varepsilon_1}\cap\cdots\cap
S_{i_k}^{\varepsilon_k}\cap\bigcap_{i\in\{1,\ldots,n\}\setminus\{i_1,\ldots,i_m\}}S_i^{\varepsilon_i}=\varnothing,\]
where $\varepsilon_i\in\{0,1\}$ is arbitrary for each $i\notin\{i_1,\ldots,i_k\}$, a contradiction.
\end{proof}
\begin{lemma}\label{l-2}
Let $V=\{S_1,\ldots,S_n\}$ be a fully reducible simple $m$-dimensional Venn diagram. Then $W=\{S_1\cap S_n,\ldots,S_{n-1}\cap S_{n}\}$ is an $(m-1)$-dimensional Venn diagram embedded on the $m$-surface $S_n$.
\end{lemma}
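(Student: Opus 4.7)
The plan is to verify the three requirements of Definition \ref{d-1} for $W$ viewed as a collection of surfaces with underlying $(m-1)$-dimensional space $S_n$, namely: (i) each $S_i\cap S_n$ is a single $(m-1)$-surface on $S_n$; (ii) the sets $T_{\varepsilon}=S_1^{\varepsilon_1}\cap\cdots\cap S_{n-1}^{\varepsilon_{n-1}}\cap S_n$, indexed by $\varepsilon\in\{0,1\}^{n-1}$, form $2^{n-1}$ non-empty connected regions of $S_n$, each continuously deformable to the interior or exterior of an $(m-1)$-surface; and (iii) pairwise intersections $(S_i\cap S_n)\cap(S_j\cap S_n)$ are finite unions of $(m-2)$-surfaces on $S_n$.

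I would first establish (i). Full reducibility of $V$ gives a $2$-Venn diagram $\{S_i,S_n\}$ whose four regions $S_i^{\varepsilon}\cap S_n^{\delta}$ are non-empty and connected. By simplicity of $V$, the set $S_i\cap S_n$ is a finite disjoint union of $(m-1)$-surfaces; each component bounds a piece of $S_n$ inside the closed $m$-ball $\overline{S_i^0}$, yielding a collection of pairwise disjoint properly embedded codimension-one $(m-1)$-disks in $\overline{S_i^0}$. A standard separation count shows that $k$ such disks partition the open $m$-ball $S_i^0$ into exactly $k+1$ connected pieces, while the Venn diagram condition on $\{S_i,S_n\}$ forces this count to be $2$ (namely $S_i^0\cap S_n^0$ and $S_i^0\cap S_n^1$). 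Hence $k=1$ and $S_i\cap S_n$ is a single $(m-1)$-surface, as required.

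For (ii), non-emptiness of $T_{\varepsilon}$ is immediate from Lemma \ref{l-1} applied to $V$: the two regions of $V$ obtained by appending $\varepsilon_n=0$ and $\varepsilon_n=1$ to $\varepsilon$ are non-empty and share $T_{\varepsilon}$ as their common boundary on $S_n$. Connectedness follows by replaying the separation argument inside the region $A_{\varepsilon}=S_1^{\varepsilon_1}\cap\cdots\cap S_{n-1}^{\varepsilon_{n-1}}$ of the $(n-1)$-Venn diagram $\{S_1,\ldots,S_{n-1}\}$, which exists by full reducibility and is continuously deformable to an open $m$-ball (or its complement): the surface $S_n$ cuts $A_{\varepsilon}$ into its two non-empty connected subregions $A_{\varepsilon}\cap S_n^0$ and $A_{\varepsilon}\cap S_n^1$ inside $V$, and the separation count again forces $A_{\varepsilon}\cap S_n=T_{\varepsilon}$ to be a single properly embedded $(m-1)$-disk, which is connected and continuously deformable to the interior of an $(m-1)$-surface. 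Condition (iii) is immediate, since $(S_i\cap S_n)\cap(S_j\cap S_n)=S_i\cap S_j\cap S_n$, and the right-hand side is a finite union of $(m-2)$-surfaces by the simplicity of $V$.

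The principal technical point is the separation count underlying both (i) and (ii): namely, that $k$ pairwise disjoint properly embedded $(m-1)$-disks partition an open $m$-ball into exactly $k+1$ connected pieces. For $m=2$ this is elementary, but in higher dimensions it relies on the Jordan-Brouwer separation theorem together with the tameness that the simplicity hypothesis implicitly supplies. All region counts in the argument reduce to this single topological fact, and I expect this to be the main place where care is needed.
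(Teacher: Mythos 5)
Your proposal follows essentially the same route as the paper's proof: use full reducibility to make each $\{S_i,S_n\}$ a Venn diagram (hence $S_i\cap S_n$ a single connected $(m-1)$-surface), use the Venn diagram $\{S_1,\ldots,S_{n-1}\}$ to show $S_n$ meets each of its $2^{n-1}$ regions in exactly one connected piece, and reduce the intersection condition to $S_{i}\cap S_{j}\cap S_n$ via simplicity of $V$. The only difference is that you spell out the Jordan--Brouwer-type separation count that the paper asserts implicitly (and buries in its Lemma \ref{l-3}), which makes your write-up somewhat more careful than the original.
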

\begin{proof}
Since $V$ is fully reducible, $\{S_i,S_n\}$ is a simple Venn diagram for each $1\leq i<n$. Thus $S_i\cap S_n$ is connected and it can be transformed continuously to an $(m-1)$-surface. On the other hand, $\{S_1,\ldots,S_{n-1}\}$ is a simple Venn diagram and $S_n$ divides each region in this diagram into two new regions so that it possesses a unique edge in each region. This implies $W=\{S_1\cap S_n,\ldots,S_{n-1}\cap S_n\}$ contains exactly $2^{n-1}$ distinct regions. Moreover, $(S_{i_1}\cap S_n)\cap\cdots\cap (S_{i_k}\cap S_n)=S_{i_1}\cap\cdots\cap S_{i_k}\cap S_n$ is a union of finitely many $m-(k+1)+1=((m-1)-k+1)$-surfaces. Hence $W$ is a simple $(m-1)$-dimensional Venn diagram. Since every subset of $W$ is itself a Venn diagram as $V$ is fully reducible, it follows that $W$ is a fully reducible simple Venn diagram.
\end{proof}
\begin{lemma}\label{l-3}
Let $V=\{S_1,\ldots,S_n\}$ be a simple Venn diagram and $A$ be a region. Then every edges of $A$ belongs to distinct surfaces.
\end{lemma}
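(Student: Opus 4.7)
The plan is to argue by contradiction. Suppose that $A$ has two distinct edges $e_{1}, e_{2}$ lying on the same surface $S_{i}$, and derive a contradiction by exploiting the topology of the union $A \cup A' \cup e_{1} \cup e_{2}$, where $A'$ is the region obtained from $A$ by flipping $\varepsilon_{i}$.

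First I would invoke Lemma \ref{l-1} to produce the unique region $A'$ sharing all parities with $A$ except $\varepsilon_{i}$; since the region adjacent to $A$ across each of $e_{1}, e_{2}$ must agree with $A$ on every $\varepsilon_{j}$ for $j \neq i$, uniqueness forces this region to be $A'$ in both cases, so $e_{1}, e_{2}$ lie on $\partial A'$ as well.

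In the two-dimensional case I would set $T := A \cup A' \cup e_{1} \cup e_{2}$ and observe that $T$ is an open, connected subset of the plane, homeomorphic to an open annulus: it is the gluing of two open disks $A, A'$ along two disjoint open arcs in their boundaries, giving Euler characteristic $0$ and two boundary components $\gamma_{1}, \gamma_{2}$. Hence $\mathbb{R}^{2} \setminus T$ splits into a bounded inner hole $H$ and an unbounded outer region $U$, with $\overline{H} \cap \overline{U} = \emptyset$.

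The crucial step is to exhibit two surfaces trapped in $\overline{H}$ and $\overline{U}$ respectively. For each $j \neq i$ the surface $S_{j}$ avoids $T$ entirely: the regions $A, A'$ lie in $S_{j}^{\varepsilon_{j}(A)}$, and by the very definition of an edge, $e_{1}, e_{2} \subset S_{i}$ avoid every other surface. Connectedness of $S_{j}$ then puts it entirely inside $\overline{H}$ or $\overline{U}$. The inner boundary circle $\gamma_{1}$ consists of edges of $\partial A$ and $\partial A'$ distinct from $e_{1}, e_{2}$, each lying on some surface $S_{j}$ with $j \neq i$; picking such an edge forces the associated $S_{j}$ into $\overline{H}$, and symmetrically some $S_{k}$ sits in $\overline{U}$. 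Since $\overline{H} \cap \overline{U} = \emptyset$, we conclude $S_{j} \cap S_{k} = \emptyset$. Finally, applying Lemma \ref{l-1} to the pair $\{S_{j}, S_{k}\}$ gives that all four intersections $S_{j}^{\epsilon} \cap S_{k}^{\epsilon'}$ are non-empty; but two disjoint simple closed curves in the plane leave one of these intersections empty (whichever curve lies in one component of the other's complement), producing the contradiction.

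For $m \geq 3$ the same construction makes $T$ a genus-one handlebody, whose complement in the underlying $m$-space may be connected, so the two-component separation argument has to be replaced. The natural adaptation uses the generator of $H_{1}(T) = \mathbb{Z}$ (the loop through $A, e_{1}, A', e_{2}$) and tracks how the surfaces contributing edges to the two facets of the handle link this loop from opposite sides, again producing two disjoint surfaces in contradiction with Lemma \ref{l-1}. The main obstacle is precisely this extension: in higher dimensions the handlebody no longer disconnects the ambient space, so the separation of the two "boundary surfaces" of $T$ must be established by a more delicate topological invariant rather than by planar connectivity.
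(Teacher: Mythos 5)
Your approach is genuinely different from the paper's, but as written it does not prove the lemma. The fatal problem is dimension: the lemma is stated (and later used) for simple Venn diagrams of every dimension $m\geq 2$, and your final paragraph concedes that for $m\geq 3$ the handle $T=A\cup A'\cup e_1\cup e_2$ no longer separates the ambient space, so the whole mechanism of trapping two surfaces in disjoint complementary components breaks down; the ``more delicate topological invariant'' you appeal to is never produced. That is not a loose end but the entire content of the lemma outside the plane. The paper's argument avoids the ambient separation problem altogether: delete $S_i$, let $A$ extend to the connected piece $B$ of $\bigcap_{j\neq i}S_j^{\varepsilon_j}$ containing it, and note that the two (or more) edges of $S_i$ inside $B$ would cut $B$ into at least three components, while $B\setminus S_i$ is contained in the union of the two connected regions $A$ and $A'$ of $V$ and so has at most two components. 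This local counting argument transfers to higher dimensions in the way the paper intends, whereas your global linking argument does not.

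Even restricted to $m=2$ there are unjustified steps at the crux. The inference ``$T$ is an open annulus, hence $\mathbb{R}^2\setminus T$ has a bounded component $H$ and an unbounded component $U$'' is false for open annuli in general (the punctured plane is one); here it is true, but only after running the Jordan curve theorem on the core circle of $T$ and checking that its inside is not absorbed into $T$. More seriously, the assertion that the inner frontier of $T$ ``consists of edges \dots each lying on some surface $S_j$ with $j\neq i$'' is exactly the point that needs proof: if $A$ has three or more edges on $S_i$ (you only selected two of them), or if the frontier of $H$ lies on $S_i$ itself (note $S_i\setminus(e_1\cup e_2)$ is disconnected and its pieces live in $\mathbb{R}^2\setminus T$), then no surface $S_j$ with $j\neq i$ is visibly trapped in $H$, and producing one is essentially equivalent to the separation statement you are trying to establish. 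The endgame is fine --- two disjoint Jordan curves always leave one of the four sign-regions empty, contradicting Lemma \ref{l-1} --- though you should note that your construction may yield $j=k$, in which case the contradiction is instead that a single nonempty surface lies in two disjoint sets.
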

\begin{proof}
Assume that $A$ is a region with two distinct edges belonging to $S_i$ for some $i$. Omitting $S_i$ from $V$, the region $A$ extends to a region $B$ in $V\setminus\{S_i\}$. Now $S_i$ divides $B$ into at least three parts and we should have an unconnected region when $S_i$ is joined again to $V\setminus\{S_i\}$, which is impossible.
\end{proof}

Note that, Lemma \ref{l-3} is in fact the restatement of \cite[Lemma 4.6]{kbc-ph-rep} to arbitrary dimensions.
\section{Main theorems}
Now we are able to prove our main results. As mentioned before, Theorem \ref{t-2} can be applied to replace conditions used to define fully reducible simple Venn diagrams by a subtancially smaller sets of conditions.
\begin{theorem}\label{t-2}
Let $V=\{S_1,\ldots,S_n\}$ be a simple Venn diagram and $1<r<n$. Then $V$ is fully reducible if and only if every subset of $V$ of size $r$ is a Venn diagram.
\end{theorem}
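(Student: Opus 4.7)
The implication ($\Rightarrow$) is immediate from Definition~\ref{d-3}. For the converse, assume every $r$-subset of $V$ is a Venn diagram. By Lemma~\ref{l-1}, every $k$-subset $W\subseteq V$ has all $2^k$ regions non-empty, and the intersection conditions of Definitions~\ref{d-1} and~\ref{d-2} descend to $W$ automatically from $V$ being simple. The task therefore reduces to showing that each region of every subset of $V$ is connected (the disk-like condition follows with the same data). My plan is two-sided induction on $k=|W|$ with base $k=r$.

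For the downward step, suppose all $s$-subsets are Venn diagrams ($2\le s\le r$) and let $W$ be an $(s-1)$-subset with region $R$. I first claim that some $T\in V\setminus W$ satisfies $R\cap T\ne\varnothing$. If not, fix $S\in V\setminus W$; the $s$-subset $W\cup\{S\}$ is a Venn diagram, so $R\cap S^0$ is connected. Yet $R\cap S^0$ decomposes into its $2^{n-s}$ constituent $V$-regions, and the assumption that no surface in $V\setminus(W\cup\{S\})$ meets $R$ means that no edge labelled by such a surface lies in $R\cap S^0$, leaving these $V$-regions isolated. Since $s\le n-1$, this disconnects $R\cap S^0$, a contradiction. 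With such a $T$ in hand, the decomposition $R=(R\cap T^0)\cup(R\cap T)\cup(R\cap T^1)$ exhibits $R$ as the union of two connected regions of the $s$-subset $W\cup\{T\}$ glued along the non-empty piece $R\cap T$, so $R$ is path-connected. The case $k=1$ is trivial because the interior and exterior of an $m$-surface are each connected.

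For the upward step, suppose all $s$-subsets are Venn diagrams ($r\le s\le n-2$) and let $W$ be an $(s+1)$-subset with region $R$. Pick $S\in W$ and set $W'=W\setminus\{S\}$; then the $W'$-region $R^*\supseteq R$ is connected by hypothesis, and $R$ is one of the two sides of $R^*$ cut by $S$. To conclude it suffices to show both sides are connected. By Lemma~\ref{l-3}, each $V$-region inside $R^*$ has at most one edge on $S$, which controls how $S$ traverses $R^*$; combined with the global connectedness of $R^*$ this forces $S$ to separate $R^*$ into exactly two connected components, namely $R$ and its $S$-twin. The case $k=n$ is $V$ itself.

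The hardest step is the upward one: ruling out that $S$ slices the connected region $R^*$ into three or more pieces. The one-edge-per-surface bound of Lemma~\ref{l-3} is the crucial local ingredient, but to upgrade this to the global statement that the cut yields precisely two connected pieces requires a careful topological-combinatorial argument tying the incidence of $S$ with each $V$-region in $R^*$ to the connectedness of $R^*$ itself.
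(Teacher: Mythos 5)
Your forward implication and your downward induction are sound and essentially match the paper's step (1): both arguments reconstitute a region $R$ of an $(s-1)$-subset as two connected regions of an $s$-subset glued along a non-empty piece of an auxiliary surface. (The paper obtains the non-emptiness of $R\cap T$ a little more directly, by observing that every region of the $r$-element Venn diagram $W\cup\{S_{i_r}\}$ is connected and has both halves under $S_{i_{r+1}}$ non-empty by Lemma \ref{l-1}; your contradiction argument via the $2^{n-s}$ constituent $V$-regions also works.)

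The upward step, however, is a genuine gap, and it is the crux of the theorem. You assert that Lemma \ref{l-3} ``forces $S$ to separate $R^*$ into exactly two connected components,'' and your closing paragraph concedes that upgrading the one-edge-per-region bound to this global statement ``requires a careful topological-combinatorial argument'' --- which you do not give. Note first that Lemma \ref{l-3} applied to $V$ only bounds the number of edges of $S$ in each $V$-region, and $R^*$ contains $2^{n-s}$ of these, so the lemma does not directly limit how often $S$ crosses $R^*$; to make it bite one must apply it to an $s$-element subset containing $S$. That is exactly what the paper does, in two stages absent from your sketch. In the plane (its step (3)): if $S_{i_{r+1}}$ had two or more edges in a region $A$ of the $r$-element diagram $W'$, one finds a surface $S_{i_j}$ whose edge on $A$ is not crossed by every edge of $S_{i_{r+1}}$ in $A$ (or, failing that, shows $A$ has only two edges and, when $r>2$, finds a surface with no edge on $A$); deleting $S_{i_j}$ enlarges $A$ to a region $B$ of the genuine Venn diagram $(W'\setminus\{S_{i_j}\})\cup\{S_{i_{r+1}}\}$ in which $S_{i_{r+1}}$ has at least two edges, contradicting Lemma \ref{l-3}, and the residual case reduces to $r=2$, which needs its own base-case verification (step (2)), also missing from your proposal. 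For $m\geq 3$ the paper does not argue inside $R^*$ at all: it inducts on the dimension, using Lemma \ref{l-2} to restrict the configuration to the surface $S_{i_{r+1}}$, concluding that $\{S_{i_1}\cap S_{i_{r+1}},\ldots,S_{i_r}\cap S_{i_{r+1}}\}$ is a Venn diagram on $S_{i_{r+1}}$ and hence that $S_{i_{r+1}}$ contributes exactly $2^r$ edges, one per region of $\{S_{i_1},\ldots,S_{i_r}\}$. Without these ingredients your upward step is an unproved assertion rather than a proof.
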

\begin{proof}
Clearly, if $V$ is fully reducible, then every subset of $V$ of size $r$ is a Venn diagram. Now suppose that $1<r<n$ is a fixed number and that every subset of $V$ of size $r$ is a Venn diagram. To prove $V$ is fully reducible we first consider the following cases:

(1) Suppose $1<r<n$ and the subsets of $V$ of size $r$ are Venn diagrams. Let $W=\{S_{i_1},\ldots,S_{i_{r-1}}\}$ be a subset of $V$ and $S_{i_r},S_{i_{r+1}}\in V\setminus W$ be two different surfaces. Also let $W'=W\cup\{S_{i_r},S_{i_{r+1}}\}$. Since $W'\setminus\{S_{i_{r+1}}\}$ is a Venn diagram, $S_{i_{r+1}}$ divides all regions of $W'\setminus\{S_{i_{r+1}}\}$ and hence that of $W=W'\setminus\{S_{i_r},S_{i_{r+1}}\}$. Now $W\cup\{S_{i_{r+1}}\}$ is a Venn diagram with all regions connected. Since $W$ has all $2^{r-1}$ possible regions by Lemma \ref{l-1} it follows that $W$ is a Venn diagram. Hence, every subset of $V$ of size $r-1$ is a Venn diagram.

(2) If $r=2$, then by invoking Lemmas \ref{l-1} and \ref{l-3} and checking all possible cases we can see that all three surfaces form a Venn diagram.

(3) Assume that $m=2$, $1<r<n$, and every subset of $V$ of size $r$ is a Venn diagram. Let $W=\{S_{i_1}\ldots,S_{i_{r+1}}\}$ be a subset of $V$. Then $W'=W\setminus\{S_{i_{r+1}}\}$ is a Venn diagram, which implies that $S_{i_{r+1}}$ divides all regions of $W'$. To prove that $W$ is a Venn diagram it is enough to show that $S_{i_{r+1}}$ divides every region of $W'$ into exactly two parts. Let $A$ be a region of $W'$ and suppose that $S_{i_{r+1}}$ divides $A$ into at least three parts. Then $S_{i_{r+1}}$ has a least two edges in $A$. If $A$ possesses an edge of $S_{i_j}$, which is not divided by all edges of $S_{i_{r+1}}$ in $A$ as it is shown in Figure \ref{fig}, then by omitting $S_{i_j}$ from $W'$, $A$ extends to a region $B$, which is divided by $S_{i_{r+1}}$ more than twice. However, this is impossible by Lemma \ref{l-3} and the fact that $(W'\setminus\{S_{i_j}\})\cup\{S_{i_{r+1}}\}$ is a Venn diagram. Hence we can assume that every edge of $A$ is divided by all edges of $S_{i_{r+1}}$ in $A$. From this, we get that $A$ has only two edges, since every edges of $S_{i_{r+1}}$ enters from one edge and exits from another edge of $A$. If $r>2$, then there exists a surface $S_{i_j}$ of $W$, which has no edges in common with $A$ and omitting $S_{i_j}$ from and joining $S_{i_{r+1}}$ to $W'$, we get again a similar contradiction to Lemma \ref{l-3}. Thus $r=2$ contradicting part (2). Therefore, $S_{i_{r+1}}$ divides every region of $W'$ into exactly two new regions that is $W$ is a Venn diagram.

\begin{figure}[htbp]
\begin{picture}(250,150) \thinlines
\qbezier(0,50)(60,70)(75,150) \qbezier(0,80)(50,50)(60,0)
\qbezier(40,0)(100,70)(250,90) \qbezier(25,150)(60,70)(250,125)
\qbezier(160,150)(100,60)(210,0) \qbezier(210,150)(140,80)(250,65)
\qbezier(26,61)(53,80)(65.5,111.5) \qbezier(26,63)(50,40)(57,17)
\qbezier(55,17)(100,60)(195,82)
\qbezier(64,111)(105,93)(183,107.5)
\qbezier(182,108.5)(178,92)(195,80.5)
\qbezier(138.5,103)(135,83)(142,65)
\qbezier(141.5,103)(138,83)(144.5,66) \thicklines
\qbezier(0,110)(90,80)(80,0) \qbezier(150,0)(70,120)(250,80)
\put(25,102){$S_{i_{k+1}}$} \put(114,20){$S_{i_{k+1}}$}
\put(162,40){$S_{i_j}$} \put(80,70){$A$} \put(120,80){$B$}
\end{picture}
\caption{}\label{fig}
\end{figure}

(4) Suppose that in an $m$-dimensional space if every set of $k$ surfaces ($k>1$) form a Venn diagram and every set of $k+1$ surfaces contain all $2^{k+1}$ possible regions, then every set of $k+1$ surfaces form a Venn diagram. Let $S_{i_1},\ldots,S_{i_{r+1}}$ be surfaces in an $(m+1)$-dimensional space, which contain all $2^{r+1}$ possible regions and assume that all $r$ surfaces form a Venn diagram. Then Lemma \ref{l-2} shows that $S_{i_1}\cap S_{i_{r+1}},\ldots,S_{i_{r}}\cap S_{i_{r+1}}$ satisfy the induction hypothesis for $k=r-1$. Thus $S_{i_1}\cap S_{i_{r+1}},\ldots,S_{i_r}\cap S_{i_{r+1}}$ form a Venn diagram on $S_{i_{r+1}}$. Now $\{S_{i_1},\ldots,S_{i_{r+1}}\}$ is a Venn diagram, since $\{S_{i_1},\ldots,S_{i_r}\}$ is a Venn diagram with $2^r$ regions and moreover $S_{i_{r+1}}$ has exactly $2^r$ edges in the regions belonging to Venn diagram $\{S_{i_1},\ldots,S_{i_r}\}$, i.e., $S_{i_{r+1}}$ divides every region of $\{S_{i_1},\ldots,S_{i_r}\}$ into exactly two new regions.

Now using (1) every $r-1,r-2,\ldots,2,1$ surfaces form a Venn diagram and using (2), (3) and (4) together, every $r+1,r+2,\ldots,n$ surfaces form a Venn diagram. Therefore, $V$ is a fully reducible simple Venn diagram.
\end{proof}
\begin{corollary}\label{c-1}
If $V$ is not a fully reducible simple $n$-Venn diagram, then for each $1<r<n$, there is a subset of $V$ of $r$ surfaces that is not a Venn diagram. In this case, Lemma \ref{l-1} assures the existence of more than $2^r$ regions, i.e., at least there is one unconnected region.
\end{corollary}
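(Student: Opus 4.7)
The plan is to derive the corollary directly as the contrapositive of Theorem~\ref{t-2}, combined with a single application of Lemma~\ref{l-1}. Fix any $r$ with $1<r<n$. Theorem~\ref{t-2} asserts that $V$ is fully reducible if and only if every $r$-subset of $V$ is a Venn diagram; by contraposition, if $V$ is not fully reducible then at least one $r$-subset $W\subseteq V$ must fail to be a Venn diagram. This delivers the first assertion with no further work.

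For the second assertion I would apply Lemma~\ref{l-1} to $V$ (which is a simple $n$-Venn diagram by hypothesis) and to the offending $r$-subset $W$ produced above. Lemma~\ref{l-1} guarantees that every one of the $2^r$ intersections $S_{i_1}^{\varepsilon_1}\cap\cdots\cap S_{i_r}^{\varepsilon_r}$ is non-empty. Given this, the only remaining way for $W$ to violate Definition~\ref{d-1} is for at least one of these non-empty intersections to fail to be connected, i.e.\ to split into two or more components inside the underlying $m$-space. Counting components rather than formal regions then yields strictly more than $2^r$ connected pieces, and produces the ``unconnected region'' advertised in the statement.

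I do not foresee a serious obstacle here; both halves are essentially one-step deductions from results already in the paper. The only point requiring a small amount of care is articulating what it means for $W$ to \emph{fail} to be a Venn diagram once Lemma~\ref{l-1} has ruled out empty intersections, but this is immediate from Definition~\ref{d-1}: non-emptiness of all $2^r$ cells forces the obstruction to lie in the connectedness clause, which is precisely the ``unconnected region'' conclusion of the corollary.
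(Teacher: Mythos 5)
Your proposal is correct and follows exactly the route the paper intends: the first claim is the contrapositive of Theorem~\ref{t-2} applied to each fixed $r$ with $1<r<n$, and the second claim follows because Lemma~\ref{l-1} rules out empty cells, so the offending $r$-subset must fail the connectedness clause of Definition~\ref{d-1}, yielding more than $2^r$ connected pieces. The paper offers no separate proof beyond this same reasoning embedded in the statement, so there is nothing to add.
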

Another consequence of the preceding theorem is the next result, which holds in any dimension. In what follows, we set the following notations for a diagram $V$:
\begin{itemize}
\item $r(V)$ is the number of regions of $V$;
\item $e(V)$ is the number of edges of $V$; and
\item $e_V(S)$ is the number of edges of $S$ in $V$, for each $S\in V$.
\end{itemize}
\begin{theorem}\label{t-3}
Let $V=\{S_1,\ldots,S_n\}$ be a simple Venn diagram and $n\geq 2$. Then $e(V)\leq n2^{n-1}$ and the equality holds if and only if $V$ is fully reducible.
\end{theorem}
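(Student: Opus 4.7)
The plan is to prove the upper bound via a double-count of edge-region incidences, and to establish the equality characterization by combining this count with Lemma \ref{l-2} (for the ``fully reducible $\Rightarrow$ maximum'' direction) and Theorem \ref{t-2} (for the converse).

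For the inequality, observe that each edge lies on a single surface $S_i$ and, being a connected component of $S_i$ away from all other surfaces, locally separates the ambient $m$-space into two sides belonging to the two regions that differ only in the coordinate $\varepsilon_i$. Hence every edge is incident to exactly two regions, so summing the number of boundary edges over all regions counts each edge twice and equals $2e(V)$. By Lemma \ref{l-3}, the edges incident to a single region lie on distinct surfaces, so no region has more than $n$ edges. Since $V$ has $2^n$ regions, this yields $2e(V)\le n\cdot 2^n$, i.e.\ $e(V)\le n\cdot 2^{n-1}$. Moreover, equality forces every region to carry exactly $n$ edges, one from each surface.

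For the direction fully reducible $\Rightarrow$ maximum, apply Lemma \ref{l-2}: for each $i$, the collection $W_i=\{S_j\cap S_i:j\ne i\}$ is a fully reducible $(m-1)$-dimensional $(n-1)$-Venn diagram embedded on $S_i$. Its $2^{n-1}$ regions are exactly the connected components of $S_i\setminus\bigcup_{j\ne i}S_j$, i.e.\ the edges of $S_i$ in $V$; hence $e_V(S_i)=2^{n-1}$ for each $i$, and summation over $i$ gives $e(V)=n\cdot 2^{n-1}$.

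For the converse, assume $e(V)=n\cdot 2^{n-1}$, so by the above each region carries a unique edge on every surface. Fix $i$: the pattern $(\varepsilon_j)_{j\ne i}$ corresponds in $V$ to two regions $A_0,A_1$ distinguished by $\varepsilon_i=0,1$, each carrying a unique $S_i$-edge; since the two sides of any $S_i$-edge are regions differing only in $\varepsilon_i$, these unique edges must coincide, and $A_0\cup A_1$ is connected via the common $S_i$-edge. Thus $V\setminus\{S_i\}$ realises each of the $2^{n-1}$ sign patterns as a connected set, and inherits the simplicity condition from $V$, so it is a Venn diagram. When $n\ge 3$, Theorem \ref{t-2} applied with $r=n-1$ now yields full reducibility of $V$; the case $n=2$ is immediate, as single surfaces are automatically $1$-Venn diagrams. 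I expect the main obstacle to be articulating cleanly the local two-sidedness of an edge, which underpins both the double count and the identification of the shared $S_i$-edge; once that geometric fact is in place, the rest of the argument is essentially combinatorial.
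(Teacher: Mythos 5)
Your proof is correct, but the counting is organized differently from the paper's. The paper works surface by surface: each edge of $S_i$ corresponds to a region of $V\setminus\{S_i\}$ that $S_i$ splits in two, so $e_V(S_i)=r(V)-r(V\setminus\{S_i\})$, and Lemma \ref{l-1} gives $r(V\setminus\{S_i\})\geq 2^{n-1}$, hence $e_V(S_i)\leq 2^{n-1}$; summing yields the bound, and equality forces $r(V\setminus\{S_i\})=2^{n-1}$, which (again by Lemma \ref{l-1}) means $V\setminus\{S_i\}$ is a Venn diagram. You instead work region by region, double-counting edge--region incidences and invoking Lemma \ref{l-3} to cap each region at $n$ edges; equality then says each region carries exactly one edge per surface, and you recover connectedness of the regions of $V\setminus\{S_i\}$ by matching the unique $S_i$-edges of the two regions differing only in $\varepsilon_i$. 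The two counts are dual and rest on the same geometric fact (an edge of $S_i$ has exactly two sides, lying in regions that differ only in $\varepsilon_i$), which both you and the paper leave at the same level of informality. The paper's version obtains $e_V(S_i)=2^{n-1}$ directly and so needs neither Lemma \ref{l-2} nor Lemma \ref{l-3}; your version localizes the equality condition at each region, which makes the reassembly of the regions of $V\setminus\{S_i\}$ more explicit, but your appeal to Lemma \ref{l-2} in the forward direction is heavier than necessary, since once $V\setminus\{S_i\}$ is a Venn diagram one has $r(V\setminus\{S_i\})=2^{n-1}$ and hence $e_V(S_i)=2^{n-1}$ immediately. Both arguments finish identically, feeding the $(n-1)$-subsets into Theorem \ref{t-2} (with the trivial case $n=2$ handled separately, as you do).
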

\begin{proof}
If $1\leq i\leq n$, then $S_i$ divides every region of $V\setminus\{S_i\}$ into at most two parts so that $S_i$ has at most one edge in every region of $V\setminus\{S_i\}$. The bijection between the regions divided by $S_i$ and the edges of $S_i$ gives
\[r(V)-r(V\setminus\{S_i\})=e_V(S_i).\]
Since $S_i$ may not divide some regions of $V\setminus\{S_i\}$, we have in general $r(V)\leq 2r(V\setminus\{S_i\})$, which gives
\[e_V(S_i)=r(V)-r(V\setminus\{S_i\})\leq r(V)-\frac{r(V)}{2}=2^n-2^{n-1}=2^{n-1}.\]
Hence
\[e(V)=e_V(S_1)+\cdots+e_V(S_n)\leq n2^{n-1}.\]
For the second part, first assume that $V$ is a fully reducible Venn diagram, then $V\setminus\{S_i\}$ is a Venn diagram for each $1\leq i\leq n$. Since $r(V\setminus\{S_i\})=2^{n-1}$, we have $e_V(S_i)=2^{n-1}$ and so $e(V)=n2^{n-1}$. Conversely, if $e(V)=n2^{n-1}$, then we should have $e_V(S_i)=2^{n-1}$ or $r(V\setminus\{S_i\})=2^{n-1}$, for each $1\leq i\leq n$. This in conjunction with Lemma \ref{l-1} implies that $V\setminus\{S_i\}$ is a Venn diagram. Now, by using Theorem \ref{t-2}, $V$ is a fully reducible simple Venn diagram and the proof is complete.
\end{proof}

The following theorem characterizes all fully reducible simple Venn diagrams in presence of its converse and shows that simple Venn diagrams have a more simple structure when they are lifted up to a space of higher dimension from a given one by a method like the one used in Theorem \ref{t-1}.
\begin{theorem}\label{t-4}
Let $V=\{S_1,\ldots,S_n\}$ be an $m$-dimensional simple Venn diagram. If $V$ is fully reducible, then $n\leq m+1$.
\end{theorem}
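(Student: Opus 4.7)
The strategy is induction on the dimension $m$, descending one dimension at a time via Lemma \ref{l-2}.

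\emph{Base case $m = 2$.} Apply Lemma \ref{l-2} to obtain $W = \{S_1 \cap S_n, \ldots, S_{n-1} \cap S_n\}$ as a fully reducible Venn diagram on the simple closed curve $S_n$. Since $m = 2$, each $S_i \cap S_n$ is a $0$-sphere, i.e.\ a pair of points. Because $V$ is simple, any three surfaces meet in an empty set (a union of $(m-2)$-surfaces), so the $2(n-1)$ endpoints are pairwise distinct and partition $S_n$ into $2(n-1)$ arcs. Each of the $2^{n-1}$ connected regions of $W$ must contain at least one arc, forcing $2^{n-1} \leq 2(n-1)$. This inequality fails for $n \geq 4$, hence $n \leq 3 = m + 1$.

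\emph{Inductive step, $m \geq 3$.} Assume the theorem in dimension $m - 1$. Apply Lemma \ref{l-2} to produce $W$ as a fully reducible Venn diagram of dimension $m - 1$ living on the $(m-1)$-sphere $S_n$. To invoke the inductive hypothesis I need $W$ realized in a genuine $(m-1)$-space. Since $m - 1 \geq 2$, pick a point $p \in S_n$ lying in the outermost region of $W$, namely the one where every $S_i \cap S_n$ is exterior. Removing $p$ converts $S_n$ into a space homeomorphic to $\mathbb{R}^{m-1}$ by stereographic projection, and the outer region remains connected because a single point cannot disconnect a connected open subset of a manifold of dimension $\geq 2$. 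Thus $W$ descends to a fully reducible simple $(m-1)$-dimensional $(n-1)$-Venn diagram in the sense of Definition \ref{d-1}, and the inductive hypothesis yields $n - 1 \leq m$, i.e.\ $n \leq m + 1$.

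The main technical subtlety is the passage from the Venn diagram on the $(m-1)$-sphere $S_n$ delivered by Lemma \ref{l-2} to one in an actual $(m-1)$-space as required by Definition \ref{d-1}. This is standard after stereographic projection from a point in the outermost region, but must be spelled out carefully within the paper's conventions. Once this is in place, the base case is a direct counting argument on arcs of a circle, and the induction is mechanical.
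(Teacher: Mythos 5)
Your proof is correct and its inductive step is exactly the paper's: descend one dimension via Lemma \ref{l-2} applied to $\{S_1\cap S_n,\ldots,S_{n-1}\cap S_n\}$ and conclude $n-1\leq(m-1)+1$. Where you genuinely diverge is the base case $m=2$. The paper quotes the known planar formula $e(V)=2^{n+1}-4$ for simple $n$-Venn diagrams and combines it with Theorem \ref{t-3}, so that full reducibility forces $2^{n+1}-4=n2^{n-1}$, which holds only for $n=2,3$. You instead push the descent one step further: Lemma \ref{l-2} puts a fully reducible diagram of $n-1$ point-pairs on the circle $S_n$, simplicity makes the $2(n-1)$ points distinct, and comparing the $2^{n-1}$ nonempty regions against the $2(n-1)$ arcs gives $2^{n-1}\leq 2(n-1)$, hence $n\leq 3$. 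Your version is self-contained (it does not import the edge formula from \cite{fr-mw} or invoke Theorem \ref{t-3}), at the cost of leaning slightly harder on Lemma \ref{l-2} in the degenerate one-dimensional setting, where the paper's own phrasing of that lemma (``$S_i\cap S_n$ is connected'') is already a bit strained since a $0$-sphere is two points. You also explicitly address a point the paper passes over in silence: the diagram produced by Lemma \ref{l-2} lives on the sphere $S_n$ rather than in an $(m-1)$-space in the sense of the paper's conventions, and your stereographic projection from a point of the outermost region is the right way to repair this. Both arguments are sound; yours is more elementary, the paper's is shorter given its surrounding machinery.
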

\begin{proof}
Suppose that $V$ is fully reducible. Clearly, $n\leq m+1$ if $n=1$. Thus we may assume that $n>1$. If $m=2$, then it is known that $e(V)=2^{n+1}-4$ (see \cite{fr-mw}). Hence by Theorem \ref{t-3}, $V$ is fully reducible if and only if $2^{n+1}-4=n2^{n-1}$, which holds only for $n=2,3$.  Now suppose that $m\geq 3$ and the result holds for all Venn diagrams of dimension $m-1$. Since $V$ is fully reducible, Lemma \ref{l-2} follows that $\{S_1\cap S_n,\ldots,S_{n-1}\cap S_n\}$ is an $(m-1)$-dimensional fully reducible simple Venn diagram. Thus $n-1\leq (m-1)+1$ or $n\leq m+1$, as required.
\end{proof}

As we mentioned before, we strongly believe that the converse of Theorem \ref{t-4} is true. 
\begin{conjecture}\label{o-1}
If $V$ is a simple $m$-dimensional $n$-Venn diagram with $n\leq m+1$, then $V$ is fully reducible.
\end{conjecture}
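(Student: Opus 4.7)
The plan is induction on the dimension $m$, reducing the $m$-dimensional case to the $(m-1)$-dimensional one by restricting the diagram to one of its surfaces. The base case $m=2$ requires only $n\le 3$; the cases $n\le 2$ are immediate, and for $n=3$ it suffices by Theorem \ref{t-2} to show every pair $\{S_i,S_j\}$ is a $2$-Venn. If some region $R=S_i^a\cap S_j^b$ is disconnected, one uses that $R\cap S_k^0$ and $R\cap S_k^1$ (with $S_k$ the third curve) are connected regions of $V$ to force $R$ to split into exactly two components, one in $S_k^0$ and the other in $S_k^1$. A planar edge-count using Lemma \ref{l-3} then yields a contradiction.

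For the inductive step with $m\ge 3$, let $V=\{S_1,\dots,S_n\}$ be a simple $m$-Venn with $n\le m+1$ and fix an index $i$. Consider the trace $W_i=\{S_j\cap S_i:j\ne i\}$ on the $m$-surface $S_i$, which carries a natural $(m-1)$-dimensional structure. Lemma \ref{l-3} already supplies a uniform bound: every candidate trace region $T_{\varepsilon}=S_i\cap\bigcap_{j\ne i}S_j^{\varepsilon_j}$ has at most one connected component, because a second connected piece of $S_i$ at the label $\varepsilon$ would force the connected region $\bigcap_{j\ne i}S_j^{\varepsilon_j}\cap S_i^0$ of $V$ to carry two edges of $S_i$, violating Lemma \ref{l-3}. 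Granting further that every $T_{\varepsilon}$ is nonempty and every pairwise intersection $S_j\cap S_i$ is a single $(m-1)$-surface, $W_i$ becomes a simple $(m-1)$-Venn with $n-1\le(m-1)+1$ surfaces, and the inductive hypothesis renders it fully reducible. In particular $S_i$ has exactly $2^{n-1}$ edges in $V$, so the edge-counting identity from the proof of Theorem \ref{t-3}, together with Lemma \ref{l-1}, forces $V\setminus\{S_i\}$ to be a Venn diagram. Since this holds for every $i$, Theorem \ref{t-2} applied with $r=n-1$ concludes that $V$ is fully reducible.

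The main obstacle is verifying the two hypotheses that promote $W_i$ to a Venn diagram: nonemptiness of each trace region $T_{\varepsilon}$ and connectedness of each pairwise intersection $S_j\cap S_i$. An empty $T_{\varepsilon}$ is equivalent to the candidate region $\bigcap_{j\ne i}S_j^{\varepsilon_j}$ of $V\setminus\{S_i\}$ breaking into two components separated by $S_i$, and a disconnected $S_j\cap S_i$ imposes an analogous pathology inside $S_i$. Both phenomena are a priori compatible with $V$ being a simple Venn diagram, and ruling them out in the regime $n\le m+1$ appears to require a genuinely higher-dimensional adaptation of the planar edge-count used in the base case, exploiting the dimension bound and Lemma \ref{l-3} in a more intricate fashion. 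This is the principal technical hurdle that a full proof of the conjecture would need to resolve.
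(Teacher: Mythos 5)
The statement you are trying to prove is Conjecture \ref{o-1} in the paper: the author explicitly leaves it open (it is the conjectured converse of Theorem \ref{t-4}) and offers no proof, so there is nothing in the paper to compare your argument against. What matters, then, is whether your proposal closes the gap on its own, and it does not --- as you yourself acknowledge in your final paragraph. Your base case $m=2$, $n\le 3$ is fine (indeed it follows more directly from the planar edge count $e(V)=2^{n+1}-4$ together with Theorem \ref{t-3}, exactly as in the proof of Theorem \ref{t-4}). The problem is the inductive step. To promote the trace $W_i=\{S_j\cap S_i: j\ne i\}$ to an $(m-1)$-dimensional Venn diagram you must know (a) that every trace region $T_\varepsilon$ is nonempty and (b) that every $S_j\cap S_i$ is a single connected $(m-1)$-surface. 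The only tool the paper provides for this is Lemma \ref{l-2}, and that lemma takes \emph{full reducibility of $V$} as a hypothesis --- precisely the conclusion you are trying to reach. Your argument is therefore circular at its load-bearing step: you ``grant'' (a) and (b), derive that $W_i$ is fully reducible by induction, conclude $e_V(S_i)=2^{n-1}$, and then invoke Theorems \ref{t-3} and \ref{t-2}; but (a) and (b) encode essentially the same information as reducibility of the pairs and of $V\setminus\{S_i\}$, so nothing has been gained.

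Concretely: an empty $T_\varepsilon$ means $S_i$ fails to divide the region $\bigcap_{j\ne i}S_j^{\varepsilon_j}$ of $V\setminus\{S_i\}$, which is exactly the failure mode that makes $V\setminus\{S_i\}$ a non-Venn subset (cf.\ Corollary \ref{c-1}); a disconnected $S_j\cap S_i$ is exactly the failure of $\{S_i,S_j\}$ to be a $2$-Venn diagram. Neither is excluded by Lemma \ref{l-3} alone, which only controls how many edges of a \emph{single} surface can meet a \emph{connected} region, and your appeal to it rules out multiple components of $S_i$ within one connected region of $V\setminus\{S_i\}$ but says nothing when that region is itself disconnected. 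Ruling out (a) and (b) under the hypothesis $n\le m+1$ is the entire content of the conjecture, and no argument in your proposal, nor in the paper, supplies it. The proposal is a reasonable reduction of the conjecture to two concrete sub-statements, but it is not a proof.
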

\section{More open problems}
We generalized the notion of Venn diagrams to higher dimensional spaces and considered a special problem. Thus we should leave many questions open. Here we state some of them, which seems to be of more importance.
\begin{conjecture}\label{o-2}
Every two simple $m$-dimensional $n$-Venn diagrams have the same number of edges for all $m\geq 2$ and $n\geq 1$.
\end{conjecture}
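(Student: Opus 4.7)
The plan is to induct on the dimension $m$, aiming for a closed formula for $e(V)$ in terms of $m$ and $n$. For the base case $m=2$, the value $e(V)=2^{n+1}-4$ is the classical count for simple planar Venn diagrams, already invoked inside the proof of Theorem~\ref{t-4}. It is obtained from Euler's formula $f_0-f_1+f_2=2$ on the planar cell complex together with the Venn constraint $f_2=2^n$ and the degree relation $2f_1=4f_0$ enforced by simplicity (each vertex is a transversal crossing of exactly two curves); these three equations determine $f_1$ uniquely, so the claim holds in dimension two.

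For the inductive step, the key tool is the identity
\[
e_V(S_i)=r(V)-r(V\setminus\{S_i\})=2^n-r(V\setminus\{S_i\})
\]
extracted from the proof of Theorem~\ref{t-3}. Summing over $i$ rewrites the conjecture as the statement that $\sum_{i=1}^n r(V\setminus\{S_i\})$ depends only on $m$ and $n$. I would try to prove, more generally and by downward induction on subset size $k$, that $\sum_{|W|=k}r(W)$ depends only on $m$, $n$ and $k$. To control $r(W)$ for a subset $W$ that is not itself a Venn diagram, I would pass to any surface $S_i\in W$: the intersections $\{S_j\cap S_i:S_j\in W,\,j\neq i\}$ form an arrangement on the $(m-1)$-sphere $S_i$ whose top cells are exactly the edges of $W$ lying on $S_i$, and whose analysis---if it were an $(m-1)$-dimensional Venn diagram---would fall under the induction hypothesis at $(m-1,|W|-1)$.

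The main obstacle, and the reason I expect this step to be delicate, is that the induced arrangement on $S_i$ need not be a Venn diagram when $V$ is not fully reducible: Lemma~\ref{l-1} only guarantees that every sign pattern is non-empty, while Corollary~\ref{c-1} warns that some may be disconnected, so the clean recursion behind Lemma~\ref{l-2} is no longer available. To bridge this gap I would strengthen the inductive statement to something of the form ``any simple arrangement of $m$-surfaces in which all $2^n$ sign patterns are non-empty has an edge count determined by $m$ and $n$ alone,'' and try to obtain the extra identities needed by combining Euler's formula on each $S_i$ with the standard local normal form at $k$-fold transversal intersections forced by simplicity. Showing that the contributions of disconnected sign patterns cancel across the full sum $\sum_i r(V\setminus\{S_i\})$ is where the argument has to become substantive, and is presumably the reason the conjecture has been left open in the paper.
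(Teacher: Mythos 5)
This statement is Conjecture \ref{o-2}; the paper offers no proof of it, and your proposal does not close it either --- you concede as much in your final sentence. The base case and the first reduction are fine: Euler's formula does give $e(V)=2^{n+1}-4$ for simple planar diagrams, and summing $e_V(S_i)=2^n-r(V\setminus\{S_i\})$ (valid because Lemma \ref{l-3} forces $S_i$ to meet each connected component of the complementary arrangement in at most one edge) correctly converts the conjecture into the statement that $\sum_{i=1}^n r(V\setminus\{S_i\})$ is independent of the particular diagram. But everything after that is a plan rather than an argument. The downward induction on $|W|$ breaks at the very next level: the identity $r(W)-r(W\setminus\{S\})=e_W(S)$ was derived from Lemma \ref{l-3} applied to the \emph{full} Venn diagram $V$, and for a proper sub-arrangement $W$ a surface $S\in W$ may cut a region of $W\setminus\{S\}$ into three or more pieces --- this is exactly the situation Corollary \ref{c-1} describes when $V$ is not fully reducible --- so the bookkeeping between levels $k$ and $k-1$ is no longer a bijection. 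Likewise the passage to the induced arrangement on $S_i$ only lands inside your inductive hypothesis when that arrangement is itself a Venn diagram, which Lemma \ref{l-2} guarantees only under full reducibility; by Theorem \ref{t-4} full reducibility forces $n\leq m+1$, so precisely in the range $n>m+1$ where the conjecture is nontrivial the recursion never gets off the ground.

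The strengthened statement you propose (``any simple arrangement of $m$-surfaces in which all $2^n$ sign patterns are non-empty has an edge count determined by $m$ and $n$'') is asserted as a target, not proved, and no mechanism is supplied for the claimed cancellation of the disconnected sign patterns across the sum $\sum_i r(V\setminus\{S_i\})$; that cancellation \emph{is} the content of the conjecture. The only regime in which the paper itself can compute $e(V)$ in dimension $m\geq 3$ is $n\leq m+1$, where it obtains $e(V)=n2^{n-1}$, and even that computation is conditional on Conjecture \ref{o-1}. So your proposal is a sensible reduction and a candidate strategy, but it leaves the essential step open.
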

\begin{conjecture}\label{o-3}
If $V$ is a simple $m$-dimensional $n$-Venn diagram, then
\[e(V)\leq m2^n+a_0+a_1n+\cdots+a_{m-2}n^{m-2},\]
where the coefficients $a_0,a_1,\ldots a_{m-2}$ satisfy the equation
\[
\left[\begin{array}{ccccc}
1&2&2^2&\ldots&2^{m-2}\\
1&3&3^2&\ldots&3^{m-2}\\
\vdots&\vdots&\vdots&\ddots&\vdots\\
1&m&m^2&\ldots&m^{m-2}
\end{array}\right]
\left[\begin{array}{c}
a_0\\
a_1\\
\vdots\\
a_{m-2}
\end{array}\right]
= \left[\begin{array}{c}
2\cdot2^1-m\cdot2^2\\
3\cdot2^2-m\cdot2^3\\
\vdots\\
m\cdot2^{m-1}-m\cdot2^m
\end{array}\right].
\]
\end{conjecture}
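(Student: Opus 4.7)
The plan is to attack this bound by induction on the dimension $m$, with a secondary induction on the number $n$ of surfaces. The base case $m=2$ is immediate from the known identity $e(V)=2^{n+1}-4$ cited in \cite{fr-mw}: the matrix equation collapses to the single relation $a_0 = 2\cdot 2^{1} - 2\cdot 2^{2} = -4$, and the conjectured bound $2\cdot 2^n - 4$ exactly matches (and is attained by) every simple planar $n$-Venn diagram.

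For $m\geq 3$, one should first observe that the matrix equations in the statement encode precisely the interpolation conditions
\[
m\cdot 2^k + P(k) \;=\; k\cdot 2^{k-1}, \qquad k = 2, 3, \ldots, m,
\]
where $P(n) = a_0 + a_1 n + \cdots + a_{m-2} n^{m-2}$. Since $P$ has degree at most $m-2$, these $m-1$ conditions determine $P$ uniquely via Lagrange interpolation. For $n\in\{2,\dots,m\}$ the conjectured bound reduces to $n\cdot 2^{n-1}$, which by Theorem \ref{t-3} combined with the assumed Conjecture \ref{o-1} is in fact achieved by any fully reducible simple $n$-Venn diagram. A short algebraic check should also show $P(m+1) = (1-m)\cdot 2^m$, so the bound remains tight at $n = m+1$ too, handling the full range $n\leq m+1$.

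For the inductive step $n > m+1$, I would exploit the identity $e(V) = n\cdot 2^n - \sum_{i=1}^n r(V\setminus\{S_i\})$ extracted from the proof of Theorem \ref{t-3}. For each surface $S_i$ one has $e_V(S_i) = 2^n - r(V\setminus\{S_i\})$, and the family $\{S_j \cap S_i : j\neq i\}$ forms a simple arrangement of $n-1$ surfaces on the $(m-1)$-sphere $S_i$ whose regions are exactly the edges of $S_i$ in $V$. Applying a sphere-version of the induction hypothesis to this arrangement should bound $e_V(S_i)$ by an $(m-1)$-dimensional analog of the conjectured polynomial evaluated at $n-1$; summing over $i$ and simplifying using the identity above should then yield the desired $m$-dimensional bound.

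The main obstacle is twofold. First, the arrangement induced on $S_i$ is not genuinely a Venn diagram on $S_i$: some of the formal sign-vector regions $\bigcap_{j\neq i} (S_j\cap S_i)^{\varepsilon_j}$ may fail to be connected, and the ambient is a sphere rather than Euclidean space. The induction hypothesis must therefore be strengthened to cover simple arrangements of $(m-1)$-surfaces on an $(m-1)$-sphere with a controlled region deficit, together with a companion recursive identity linking the polynomials $P_m$ across dimensions. Second, extracting the correct leading constant $m$ in $m\cdot 2^n$, rather than the $(m-1)$ that would arise naively from summing $n$ copies of the inductive bound, seems to require a refined flag-counting argument or an Euler-characteristic identity tailored to simple cellular subdivisions by closed hypersurfaces; this last step is where I expect the bulk of the real work to lie.
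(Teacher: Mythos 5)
The statement you are addressing is left as a conjecture in the paper: there is no proof to compare against, only a discussion (contingent on Conjecture \ref{o-1}) of when equality holds for $n\leq m+1$. Your treatment of the range $n\leq m+1$ is essentially sound. The matrix equation does encode $P(k)=k\cdot 2^{k-1}-m\cdot 2^{k}$ for $k=2,\ldots,m$, so on that range the conjectured bound coincides with the unconditional bound $e(V)\leq n2^{n-1}$ of Theorem \ref{t-3}, and no appeal to Conjecture \ref{o-1} is needed for the inequality itself, only for its tightness. Your claim that $P(m+1)=(1-m)2^{m}$ is, however, not a ``short algebraic check'': it is an extra interpolation condition imposed on a polynomial already determined by $m-1$ conditions, and the paper itself only obtains it as a consequence of Conjecture \ref{o-1}, recording the resulting determinant identity in Section 4 rather than verifying it directly.

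The genuine gap is the case $n>m+1$, which is precisely where Conjecture \ref{o-3} says something beyond Theorem \ref{t-3} (for $m=2$, $n=4$ it claims $28$ against the $32$ of Theorem \ref{t-3}). Your induction on $m$ has two unresolved problems, only partly acknowledged. First, $e_V(S_i)$ is the number of connected components of $S_i\setminus\bigcup_{j\neq i}S_j$, that is, the number of top-dimensional cells of the trace arrangement on $S_i$, whereas the induction hypothesis bounds the number of \emph{edges} (codimension-one cells) of an $(m-1)$-dimensional diagram; these are different quantities, so the hypothesis does not apply to the quantity you need to control. Second, when $V$ is not fully reducible the trace arrangement on $S_i$ need not be a Venn diagram at all (Lemma \ref{l-2} requires full reducibility), so even a region-counting version of the induction hypothesis would have to be replaced by a much stronger statement about arbitrary simple arrangements of closed hypersurfaces on a sphere, which is not formulated, let alone proved. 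Combined with the leading-coefficient discrepancy you flag ($m$ versus the $m-1$ that naive summation produces), this means the proposal is a programme rather than a proof; the conjecture remains open.
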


It is not difficult to construct a simple $3$-dimensional $5$-Venn diagram with $73$ edges, while the upper bound in the above is $76$. Hence in the forementioned inequality, $e(V)$ does not attain the upper bound in general.

However, in presence of Conjecture \ref{o-1}, we can show that the equality occurs in Conjecture \ref{o-3} for simple $m$-dimensional $n$-Venn diagrams when $n\leq m+1$. To see this, let $V_{m,n}=\{S_1,\ldots,S_n\}$ be a simple $m$-dimensional $n$-Venn diagram. If $V_{i,j}=\{S'_1,\ldots,S'_j\}$ is given, we define $V_{i,j-1}$ and $V_{i-1,j-1}$ by
\[V_{i,j-1}=\{S'_1,\ldots,S'_{j-1}\}\]
and
\[V_{i-1,j-1}=\{S'_1\cap S'_j,\ldots, S'_{j-1}\cap S'_j\}.\]
Now, if $n\leq m+1$, then Conjecture \ref{o-1} and Lemma \ref{t-3} together imply that $V_{m-1,n-1}$ is a simple $(m-1)$-dimensional Venn diagram. Moreover, $V_{m,n-1}$ is also a Venn diagram. Iterating this way, we see that all $V_{i,j}$ are simple Venn diagrams, where $i=2,\ldots,m$, $j=1,\ldots,n$, and $j-i\leq n-m$.

We proceed by induction on $m$ to prove the equality in Conjecture \ref{o-3} when $n\leq m+1$. For the case $m=2$, we have the equality $e(V)=2^{n+1}-4=m2^n-a_0$ for every simple $2-$dimensional $n$-Venn diagram. Suppose $m\geq 3$ and there exist $a_0,a_1,\ldots,a_{m-3}$, such that $e(V)=(m-1)2^n+a_0+a_1n+\cdots+a_{m-3}n^{m-3}$ for all simple $(m-1)$-dimensional $n$-Venn diagram $V$, where $n\leq m$. Now, if $V=V_{m,n}$ is as above with $n\leq m+1$, then we get
\begin{eqnarray*}
e(V_{m,n})&=&e(V_{m,n-1})+\left[e(V_{m-1,n-1})+2^{n-1}\right]\\
&=&e(V_{m,n-2})+\left[e(V_{m-1,n-2})+2^{n-2}\right]+\left[e(V_{m-1,n-1})+2^{n-1}\right]\\
&\vdots&\\
&=&e(V_{m,2})+\left[e(V_{m-1,2})+2^2\right]+\cdots+\left[e(V_{m-1,n-1})+2^{n-1}\right]\\
&=&4+\sum_{i=2}^{n-1}\left[e(V_{m-1,i})+2^i\right]\\
&=&m2^n+4-4m+\sum_{i=2}^{n-1}\left[a_0+a_1i+\cdots+a_{m-3}i^{m-3}\right]\\
&=&m2^n+4-4m+a_0\sum_{i=2}^{n-1}1+a_1\sum_{i=2}^{n-1}i+\cdots+a_{m-3}\sum_{i=2}^{n-1}i^{m-3}\\
&=&m2^n+b_0+b_1n+\cdots+b_{m-2}n^{m-2},
\end{eqnarray*}
for some $b_0,b_1,\ldots,b_{m-2}$, which completes the proof. Note that, the coefficients can be computed from the equation $e(V)=n2^{n-1}$, for $n\leq m+1$.

Now, if $V$ is a simple $(m-1)$-dimensional $n$-Venn diagram with $n\leq m$, then, in conjunction with Conjecture \ref{o-1} and the statements above, we have
\[n2^{n-1}=e(V)=(m-1)2^n+a_0+a_1n+\cdots+a_{m-3}n^{m-3},\] 
for $n=1,\ldots,m$. Take $a_{m-2}=0$ and put $A$ , $X$, and $Y$ to be
\[\left[\begin{array}{cccc}
1&2&\cdots&2^{m-2}\\
1&3&\cdots&3^{m-2}\\
\vdots&\vdots&\ddots&\vdots\\
1&m&\cdots&m^{m-2}
\end{array}\right]
,\ 
\left[\begin{array}{c}
a_0\\
a_1\\
\vdots\\
a_{m-2}
\end{array}\right],\ 
\mbox{and}\left[\begin{array}{c}
2\cdot2^1-(m-1)\cdot2^2\\
3\cdot2^2-(m-1)\cdot2^3\\
\vdots\\
m\cdot2^{m-1}-(m-1)\cdot2^m
\end{array}\right],
\]
respectively. Then $AX=Y$ so that $X=A^{-1}Y$. Equalizing the last rows of $X$ and $A^{-1}Y$ together with the fact that $a_{m-2}=0$ and using determinant expansion rules on columns we obtain the following identity
\[\left|\begin{array}{ccccc}
1&2&\cdots&2^{m-3}&2\cdot2^1\\
1&3&\cdots&3^{m-3}&3\cdot2^2\\
\vdots&\vdots&\ddots&\vdots&\vdots\\
1&m&\cdots&m^{m-3}&m\cdot2^{m-1}\\
\end{array}\right|
=2(m-1)\left|\begin{array}{ccccc}
1&2&\cdots&2^{m-3}&2^1\\
1&3&\cdots&3^{m-3}&2^2\\
\vdots&\vdots&\ddots&\vdots&\vdots\\
1&m&\cdots&m^{m-3}&2^{m-1}\\
\end{array}\right|,
\]
for each $m\geq 3$.

As it is used before, a simple Jordan curve in the plane can divides only two edges of an $n$-gon, when it divides the $n$-gon into two parts. But the situation seems to be more complicated in $\mathbb{R}^3$. In fact, it seems that there is no limitation on the number of sides of a polyhedron in $\mathbb{R}^3$ in comparison to the edges in the plane.
\begin{conjecture}\label{o-4}
For each polyhedron $P$ in $\mathbb{R}^3$ there exists a smooth simple closed curve $\mathcal{C}$ such that
\begin{itemize}
\item[(1)]$\mathcal{C}$ divides each face of $P$ into exactly two parts;
\item[(2)]$\mathcal{C}$ does not pass from the vertices of $P$;
\item[(3)]$\mathcal{C}$ is not tangent to edges of $P$.
\end{itemize}
\end{conjecture}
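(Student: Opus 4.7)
The plan is to translate the conjecture into a combinatorial statement on the dual graph of $P$ and then attack it. Working intrinsically on the surface $\partial P \cong S^{2}$, I would first observe that $k$ pairwise disjoint simple arcs with endpoints on the boundary of a $2$-disk cut it into exactly $k+1$ pieces; so condition $(1)$ forces $\mathcal{C}$ to meet every face $F$ of $P$ in a single arc that enters through one edge of $F$ and exits through another, while $(2)$ and $(3)$ make the two endpoints lie in the interiors of their edges and the crossings transversal. Tracing $\mathcal{C}$ face by face then yields a cyclic sequence of faces in which consecutive terms share the crossed edge --- equivalently, a Hamiltonian cycle in the dual graph $P^{\ast}$ of $P$ (vertices are faces of $P$, edges join edge-adjacent pairs).

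Given such a Hamiltonian cycle, the geometric realization is essentially routine: in each face $F$ pick a smooth embedded arc joining generic interior points of the two prescribed edges transversally; glue the arcs across shared edges of consecutive faces to form a simple closed curve on $\partial P$; and then smooth near each edge-crossing using the positive dihedral angle there. The resulting $\mathcal{C}$ is $C^{\infty}$ in $\mathbb{R}^{3}$ and satisfies $(1)$--$(3)$ by construction, so the conjecture reduces cleanly to the combinatorial claim.

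The crux, and the step I expect to be the main obstacle, is the assertion that the dual of \emph{every} polyhedron carries a Hamiltonian cycle. Tutte's theorem handles the case when $P^{\ast}$ is $4$-connected, and various degree/connectivity hypotheses extend the conclusion; but by Steinitz the dual of a polyhedron is only guaranteed to be $3$-connected planar, and there are classical non-Hamiltonian $3$-connected planar graphs (the Tutte graph being the oldest example). A proof would therefore need either a new Hamiltonicity theorem tailored to duals of $3$-polytopes, or else a geometric workaround: allowing $\mathcal{C}$ to leave $\partial P$ and travel in the complement of the $1$-skeleton of $P$ inside $\mathbb{R}^{3}$ --- a codimension-$\ge 2$ complement --- turns the matching of chords across different faces into a general-position exercise and sidesteps the Hamiltonicity difficulty altogether. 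I would pursue this second route first, establishing the conjecture in its relaxed form, and then attempt to push the construction back onto $\partial P$ by deforming the ambient arcs under hypotheses on $P^{\ast}$ that are strong enough to guarantee Hamiltonicity.
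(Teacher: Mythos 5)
The paper offers no proof of this statement: it is posed explicitly as an open conjecture, so there is nothing of the author's to compare your attempt against. Judged on its own, your write-up contains one genuinely valuable observation, but it is a research plan rather than a proof --- every decisive step is announced ("essentially routine", "a general-position exercise") rather than carried out.

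Your sharpest point is the hidden dichotomy in how one reads the statement. If $\mathcal{C}$ is required to lie on $\partial P$, then condition (1) forces $\mathcal{C}$ to meet each closed face in a single properly embedded arc (a component of $\mathcal{C}\cap \bar F$ for a surface curve always has its endpoints on $\partial F$, and $k$ such arcs give $k+1$ parts), and tracing these arcs yields a Hamiltonian cycle in the dual graph; since by Steinitz's theorem the Tutte graph is realized as the dual graph of a $3$-polytope, the on-surface version of the conjecture is \emph{false}. You should say this outright instead of filing it under "obstacles". Conversely, the conjecture as written never requires $\mathcal{C}\subset\partial P$, so your "relaxed form" \emph{is} the statement, and the Hamiltonicity discussion is then irrelevant rather than the crux. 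What actually remains under the ambient reading is the part you skip: place one properly embedded smooth arc in each of the $f$ faces with endpoints in the interiors of edges and transversal to them (isolated crossings and arcs interior to a face do not separate it, so each face genuinely needs such an arc); join the resulting $2f$ endpoints into a \emph{single} cycle by $f$ pairwise disjoint connectors routed in one component of $\mathbb{R}^3\setminus\partial P$ --- this is where the complete graph on the faces replaces the dual graph and Hamiltonicity becomes free; verify the connectors meet no face in a separating arc; and smooth $\mathcal{C}$ at each edge point while keeping it non-tangent there, which uses the fact that adjacent faces are not coplanar so the face arc's tangent direction continues immediately off $\partial P$. None of this is deep, but all of it is missing. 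Finally, your closing plan to "push the construction back onto $\partial P$" under hypotheses guaranteeing Hamiltonicity of $P^{\ast}$ is circular: if $P^{\ast}$ is Hamiltonian you already have the surface curve directly, and if it is not, no deformation of the ambient curve can produce one.
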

\begin{question}\label{q-1}
What can be said about polyhedrons in $\mathbb{R}^n$ when $n>3$?
\end{question}
\begin{question}\label{q-2}
What is the number of simple $m$-dimensional $n$-Venn diagrams?
\end{question}

We note that Edwards construction of simple Venn diagrams in the plane \cite{awfe1,awfe2}, shows that there are at least $n-2$ non-isomorphic simple Venn diagrams.

This paper is a part of my bachelor's thesis on Venn diagrams.

\end{document}